\newif\ifdraft\draftfalse
\newif\ifcite
\ifcite\usepackage{showkeys}\else\usepackage[notcite,notref]{showkeys}\fi\fi
\theoremstyle{plain}
\newtheorem{theorem}[equation]{Theorem}
\newtheorem{lemma}[equation]{Lemma}
\newtheorem{corollary}[equation]{Corollary}
\theoremstyle{remark}
\theoremstyle{definition}
\newtheorem{definition}[equation]{Definition}
\theoremstyle{remark}
\DeclareMathOperator{\Sym}{Sym}
\DeclareMathOperator\Reg{\mathrm{Reg}}
\DeclareMathOperator\Gr{\mathrm{Gr}}
\DeclareMathOperator{\Orth}{\mathbf{O}}
\DeclareMathOperator\vol{\mathrm{vol}}
\DeclareMathOperator\diam{\mathrm{diam}}
\newcommand{\leqnomode}{\tagsleft@true}
\newcommand{\reqnomode}{\tagsleft@false}
\newcommand\CC{\mathbb C}
\begin{document}
\title[Volumes and affine GAGA]{Volumes of definable sets in o-minimal expansions
and affine GAGA theorems}
\author{Patrick Brosnan}
\address{Department of Mathematics\\
  University of Maryland\\
  College Park, MD USA}
\email{pbrosnan@umd.edu}

\begin{abstract}
  In this mostly expository note, I give a very quick proof of the 
  definable Chow theorem of Peterzil and Starchenko using the Bishop-Stoll
  theorem and a volume estimate for definable sets due to Nguyen and 
  Valette. 
  The volume estimate says that any $d$-dimensional definable subset of $S\subseteq\mathbb{R}^n$ 
  in an o-minimal
  expansion of the ordered field of real numbers  satisfies the inequality
  $\mathcal{H}^d(\{x\in S:\lVert x\rVert<r\})\leq Cr^d$, where $\mathcal{H}^d$
  denotes the $d$-dimensional Hausdorff measure on $\mathbb{R}^n$ and $C$ is a
  constant depending on $S$.  
  A closely related volume estimate for subanalytic sets goes back to Kurdyka and Raby.
  Since this note is intended to be helpful to algebraic geometers not versed in o-minimal structures
  and definable sets, I review these notions and also prove the main volume estimate
  from scratch.
\end{abstract}
\maketitle

\section{Introduction} The GAGA theorem of Peterzil--Starchenko~\cite{ps-ns}
says that a closed analytic subset of $\CC^n$, which is definable in an
o-minimal expansion of the ordered field $\mathbb{R}$, is algebraic.  It is a
crucial ingredient in (at least) two, closely related, recent advances in Hodge theory:
the paper by Bakker, Klingler and Tsimerman~\cite{bkt}, 
which gives a new proof of the
theorem of Cattani, Deligne and Kaplan~\cite{cdk} 
on the 
algebraicity
of the Hodge locus,
and the paper by Bakker, Brunebarbe and Tsimerman proving a conjecture of
Griffiths on the algebraicity of the image of the period map~\cite{bbt}.  
Given these important results, it seems desirable to have an understanding of
the Peterzil--Starchenko theorem from several points of view.  

The point of this (mostly expository) note is to show that the
Peterzil--Starchenko theorem follows directly from a GAGA theorem originally
due to Stoll~\cite{stoll} and a volume estimate for definable sets.
This volume estimate was known to experts in o-minimal structures for 
several years now: it is a special case of Proposition 3.1 of a 2018 paper by 
Nguyen and Valette~\cite{nv}.
Moreover, in the context of subanalytic sets, it follows from a
paper of Kurdyka and Raby~\cite{KurdykaRaby}.
I will state it precisely in Theorem~\ref{volest} below 
and prove it from scratch in \S\ref{s.proof},
but essentially it says the following:  Suppose $S$ is a $d$-dimensional subset
of $\mathbb{R}^n$, which is definable with respect to an o-minimal expansion of
$\mathbb{R}_{\mathrm{alg}}$ (for example,
$\mathbb{R}_{\mathrm{an},\mathrm{exp}}$). Then the Hausdorff measure of the set
$S(r):=\{x\in S:\lVert x\rVert< r\}$ 
viewed as a function of $r$ is in $O(r^d)$.
In other words, the volume of the intersection of $S$ with a ball of
radius $r$ grows at most as fast as a constant multiple of $r^d$.
In~\S\ref{sec-ps}, I use it to give a very quick proof of the Peterzil-Starchenko theorem.

Peterzil and Starchenko published two proofs of their theorem.  
The first, in~\cite{ps-ns}, works for o-minimal expansions of arbitrary real
closed fields and is based on results from model theory. The second proof,
in~\cite{ps-crelle}, like the proof presented in this note, relies on results
from complex analysis.  
More precisely, it relies on a paper of Shiffman that is directly related to
Stoll's theorem~\cite{shiffman} in that Shiffman's results are ultimately about
bounds on volumes of complex analytic sets.  
Aside from brevity, the main advantage of my approach is that it makes it clear
that the proof uses volume estimates that hold for all definable sets, 
not just complex analytic sets.
Still, while I think the viewpoint and the brevity of this paper are
worthwhile, the techniques are similar to the techniques of~\cite{ps-crelle}.

The proof of Theorem~\ref{volest} given in this paper is also very 
similar to the proof given by Nguyen and Valette in~\cite{nv}, which 
I learned about after the first version of this paper appeared on the ArXiv.
I decided to keep my original proof of Theorem~\ref{volest} in this note because it is self-contained
and the exposition is aimed at readers who are not experts in o-minimal
structures.  
However, the reader should be aware that neither Theorem~\ref{volest} nor its 
proof are new.
The main claim to novelty in this paper is that it points out that the version of Peterzil-Starchenko's 
definable Chow theorem proved in~\cite{ps-crelle} has an easy proof 
using Stoll's theorem.
This is interesting because it indicates that the two theorems are directly related.

To help make this paper approachable for algebraic geometers, I review
the theory of o-minimal structures in section \S\ref{sec-omin}.
In~\S\ref{sec-bs}, I review the notions of Hausdorff dimension and
Stoll's theorem, which is also sometimes called the Bishop--Stoll
theorem.  
(Bishop~\cite{bishop} generalized and extended the result of Stoll 
used in this paper.)  
In \S\ref{sec-voldef}, I state the main volume estimate, Theorem~\ref{volest}.

\subsection{Acknowledgments} I thank my colleague Chris Laskowski for several
enlightening conversations about model theory in general and o-minimal
structures in particular.  
I am also happy to thank Najmuddin Fakhruddin, Priyankur Chaudhuri, Eoin
Mackall and Swarnava Mukhopadhyay for corrections they pointed out after
reading  previous versions of this paper.
Finally, I am extremely grateful to 
  Xuan Viet Nhan Nguyen,
  who emailed me to point out that my Theorem~\ref{volest}
  is a special case of~\cite[Proposition 3.1]{nv}, and to
  Tobias Kaiser, who emailed to point out the relationship
  with the (considerably older) work of Kurdyka and Raby~\cite{KurdykaRaby}.

  \section{o-minimal structures}\label{sec-omin}
  In defining o-minimal structures, I follow the book by van den Dries~\cite{vdd}.

  \begin{definition}\label{d-omin}
    An o-minimal structure on $\mathbb{R}$ is a sequence
    $\mathcal{S}=(\mathcal{S}_n)_{n\in\mathbb{N}}$ of sets such that, for each $n$:
    \begin{enumerate}
    \item $\mathcal{S}_n$ is a boolean algebra of subsets of $\mathbb{R}^n$;
    \item $A\in\mathcal{S}_n$ implies that  $A\times\mathbb{R}$ and 
      $\mathbb{R}\times A$ are in $\mathcal{S}_{n+1}$;
    \item If $1\leq i<j\leq n$, then
      $\{(x_1,\ldots, x_n)\in\mathbb{R}^n:x_i=x_j\}\in\mathcal{S}_n$.
    \item If $\pi:\mathbb{R}^{n+1}\to\mathbb{R}^n$ denotes the projection
      onto a factor, then $A\in\mathcal{S}_{n+1}\implies \pi(A)\in\mathcal{S}_n$;
    \item For each $r\in\mathbb{R}$, $\{r\}\in\mathcal{S}_1$.  Moreover, 
      $\{(x,y)\in\mathbb{R}^2:x<y\}\in\mathcal{S}_2$;
    \item The only subsets in $\mathcal{S}_1$ are the finite unions
      of intervals and points.  
    \end{enumerate}
  \end{definition}

  Call a sequence $\mathcal{S}$ a
  \emph{structure} if it satisfies all of the hypotheses of
  Definition~\ref{d-omin} except possibly the last two~\cite[p.13]{vdd}.
  If $X\in \mathcal{S}_n$ for some $n$, then we say that $X$ is a
  \emph{definable} subset of $\mathbb{R}^n$ with respect to the
  structure $\mathcal{S}$.  Similarly, if $f:X\to Y$ is a function
  with $X\subset\mathbb{R}^n$ and $Y\subset \mathbb{R}^m$ for $n,m\in\mathbb{N}$,
  then we say $f$ is \emph{definable} if its graph (viewed as a subset of 
  $\mathbb{R}^n\times\mathbb{R}^m=\mathbb{R}^{n+m}$) is definable.  

  It is clear that, if we let $\mathcal{S}_n=\mathcal{P}(\mathbb{R}^n)$, i.e.,
  the power set of $\mathbb{R}^n$, then we get a structure.  
  (But obviously not an o-minimal one.)  
  It is also relatively easy to see that the intersection of structures is a
  structure.  
  So, given an arbitrary collection
  $\mathcal{T}_n\subset\mathcal{P}(\mathbb{R}^n)$ (for $n\in\mathbb{N}$), there
  is a smallest structure $(\mathcal{S}_n)_{n\in\mathbb{N}}$ containing
  $\mathcal{T}_n$.   
  This is the structure \emph{generated} by the $\mathcal{T}_n$.

  If $\{\mathcal{S}_n\}$ and $\{\mathcal{S}'_n\}$ are both structures with
  $\mathcal{S}'_n\subset \mathcal{S}_n$ for all $n$, then $\{\mathcal{S}_n\}$ is
  called an \emph{expansion} of $\{\mathcal{S}'_n\}$.  If
  $\{\mathcal{T}_n\}$ is any collection with
  $\mathcal{T}_n\subset\mathcal{P}(\mathbb{R}^n)$, the \emph{structure generated
  by $\{\mathcal{S}_n\cup\mathcal{T}_n\}_{n\in\mathbb{N}}$} is called the
  \emph{expansion of $\{\mathcal{S}_n\}$ generated by $\{\mathcal{T}_n\}$}. 

  One classical example of an o-minimal structure on $\mathbb{R}$ is the structure
  $\mathbb{R}_{\mathrm{alg}}$ consisting of all semi-algebraic sets.  (The 
  fact that $\mathbb{R}_{\mathrm{alg}}$ satisfies (iv) is the content of the
  Tarski-Seidenberg theorem.)  The example
  that is most important for the recent applications to Hodge theory mentioned in
  the introduction is the one called $\mathbb{R}_{\mathrm{an},\mathrm{exp}}$.
  This is the expansion of $\mathbb{R}_{\mathrm{alg}}$ generated by the graph of
  the real exponential function $x\mapsto e^x$ and the collection of all graphs
  of analytic functions on $[0,1]$.  (See
  ~\cite{vdd-m} for references. The o-minimality 
  of the expansion $\mathbb{R}_{\mathrm{exp}}$ of $\mathbb{R}_{\mathrm{alg}}$
  generated by the graph of the real exponential function is 
  a celebrated theorem of Wilkie~\cite{wilkie}.)

  It is convenient to think about definable sets in a structure in terms
  of logic as subsets of $\mathbb{R}^n$ defined by the formulas in a
  language $\mathcal{L}$ interpreted in the field of real numbers.  This
  point of view is explained (a little informally) in~\cite[Chapter
  1]{vdd}.  (For a more precise explanation of the model theory point of
  view, see, for example, Marker's book~\cite{marker}).  
  Here subsets
  $\psi$ of $\mathbb{R}^n$ are thought of as properties
  $\psi(x_1,\ldots, x_n)$ of $n$-tuples $(x_1,\ldots, x_n)$ of real
  numbers with $\psi(x_1,\ldots, x_n)$ being the property that $(x_1,\ldots, x_n)\in \psi$.  
  Suppose $S=\{\psi_i\}$ is a collection of such subsets, with
  $\psi_i\subset\mathbb{R}^{n_i}$.  
  Then the expansion of $\mathbb{R}_{\mathrm{alg}}$ generated by $\psi$
  consists of the subsets of $\mathbb{R}^n$ definable by formulas involving
  the field operations on $\mathbb{R}$, the real numbers (viewed as constants),
  the symbols $<$ and $=$, variables $(x_i)_{i=1}^{\infty}$, and the  $\psi_i$
  along with the $\forall, \exists$ and the usual logical connectives.

  \section{Volumes and the Bishop--Stoll Theorem}\label{sec-bs}

  My main reference for this section is G.~Stolzenberg's
  book~\cite{stvol}.

  Let $X=(X,d_X)$ be a metric space.
  For $\emptyset\neq S\subset X$, the diameter of $S$ is
  $\diam S:=\sup\{d_X(x,y):x,y\in S\}$.  By convention, write
  $\diam\emptyset =-\infty$.

  Suppose $S\subset X$, and $\epsilon$ is a positive real number.
  An $\epsilon$-covering of $S$ is a countable collection $\{S_i\}_{i=1}^{\infty}$
  of subsets of $S$ of diameter less than $\epsilon$ such that
  $S\subset \cup_{i=1}^{\infty} S_i$.
  Fix a non-negative real number $d$ and set 
  $$
  I(d,\epsilon, S):=\inf\left\{\sum_{i=1}^{\infty} (\diam S_i)^d:
  \{S_i\}_{i=1}^{\infty}\text{ is an $\epsilon$-covering of }S\right\}.$$
  The \emph{$d$-Hausdorff measure of $S$} is
  $$
  \mathcal{H}_d(S):=\frac{1}{2^d}\lim_{\epsilon\to 0^+} I(d,\epsilon, S).
  $$

  If $d$ is a non-negative integer and $S$ is a closed $d$-dimensional
  sub-manifold of $\mathbb{R}^n$, then the volume $\vol_d S$ (defined in the
  usual way with respect to the standard metric on $\mathbb{R}^n$) is given by
  \begin{equation} \label{eq:vold} \vol_d(S)= \frac{\pi^{d/2}}{\Gamma(\frac{d}{2}
  +1)}\mathcal{H}_d(S).  \end{equation} As it turns out, we want to refer to
  volume instead of $\mathcal{H}_d(S)$ in general.  So we use 
  the equation~\eqref{eq:vold} as a definition to define 
  $\vol_d(S)$ for an arbitrary non-negative real
  number $d$.   
  (Note that Federer's normalization for Hausdorff measure   
  in his book~\cite[\S2.10.2]{federer} differs from that of Stolzenberg.  
  For Federer, the $d$-dimensional Hausdorff measure $\mathcal{H}_d(S)$ is just what 
  we call $\vol_d(S)$.)

  Let's also make the convention that we always regard a subset $S\subset\mathbb{R}^n$
  as a metric subspace of $\mathbb{R}^n$ with its standard metric.
  For a positive real number $r$, set
  $B(r)=B_{n}(r):=\{x\in\mathbb{R}^n:|x|<r\}$.  Then, if $S\subset\mathbb{R}^n$,
  set 
  \[
    S(r):=S\cap B(r).
  \]
  We use the Big-O notation: if $f,g$ are two real valued functions defined on an
  interval of the form $(a,\infty)$, then we write $f=O(g)$ if there exists a
  constant $C$ and a real number $b>a$ such that 
  $$
  x>b\Rightarrow |f(x)|\leq Cg(x).
  $$

  \begin{theorem}[Stoll]\label{thm-bs}  Suppose that $Z$ is a closed analytic subset of
    $\CC^n$ of pure dimension $d$.  If $\vol_{2d} Z(r)=O(r^{2d})$,
    then $Z$ is algebraic.
  \end{theorem}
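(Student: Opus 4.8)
The plan is to deduce algebraicity from the classical extension theorem of Bishop together with Chow's theorem, by passing to the projective compactification. First I would embed $\CC^n$ as the standard affine chart of $\mathbb{P}^n$ and let $\overline{Z}$ denote the closure of $Z$ in $\mathbb{P}^n$ (in the ordinary, analytic, topology). Since $Z$ is already a closed analytic subset of the chart $\CC^n=\mathbb{P}^n\setminus H$, where $H$ is the hyperplane at infinity, the set $\overline{Z}\setminus H=Z$ is a pure $d$-dimensional analytic subset of $\mathbb{P}^n\setminus H$. If I can show that $\overline{Z}$ is in fact a closed analytic subset of all of $\mathbb{P}^n$, then Chow's theorem immediately gives that $\overline{Z}$ is algebraic, and hence that $Z=\overline{Z}\cap\CC^n$ is algebraic. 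So the whole problem reduces to checking that $\overline{Z}$ does not behave badly along $H$, i.e.\ that it extends analytically across the hyperplane at infinity.

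For this extension I would invoke Bishop's theorem~\cite{bishop} (the ``Bishop'' in Bishop--Stoll): if $A\subset\Omega$ is a closed analytic subset of a complex manifold $\Omega$ and $W\subset\Omega\setminus A$ is a pure $d$-dimensional analytic subset whose closure has locally finite $2d$-dimensional Hausdorff measure near every point of $A$, then $\overline{W}$ is analytic in $\Omega$. The point of Bishop's theorem, as opposed to the easier Remmert--Stein theorem, is that no dimension hypothesis relating $\dim A$ and $d$ is needed; only finiteness of volume is used, so one may take $A=H$ even though $\dim H=n-1$. I would apply this with $\Omega=\mathbb{P}^n$, $A=H$ and $W=Z$, leaving the single task of verifying local finiteness of the volume of $\overline{Z}$ along $H$, measured with respect to the Fubini--Study metric.

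The heart of the matter---and the step I expect to be the main obstacle---is translating the Euclidean growth bound $\vol_{2d}Z(r)=O(r^{2d})$ into this local finiteness at infinity. Using Wirtinger's formula, the volume of a complex analytic set is the integral of the appropriate power of the ambient Kähler form, so I would compare the flat form $\omega_0=\frac{i}{2}\sum dz_j\wedge d\bar z_j$ on $\CC^n$ with the Fubini--Study form $\omega=\frac{i}{2}\partial\bar\partial\log(1+|z|^2)$ near $H$. The finiteness of $\int_{\overline{Z}}\omega^d$ in a neighborhood of a point of $H$ corresponds precisely to the growth exponent $2d$: the naive pointwise bound $\omega\le(1+|z|^2)^{-1}\omega_0$ is just barely too weak, and one must use the extra radial decay of $\omega$ (equivalently, the Lelong monotonicity of $r\mapsto r^{-2d}\vol_{2d}Z(r)$) to see that the borderline exponent $2d$ yields convergence. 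Concretely I would estimate the Fubini--Study volume of $Z$ over the dyadic shells $\{2^k\le|z|<2^{k+1}\}$ and sum, applying the hypothesis on each shell; the summability is exactly what the quadratic-in-$r^d$ bound buys. Once local finiteness is in hand, Bishop's theorem produces the analytic extension $\overline{Z}$ and Chow's theorem finishes the proof.

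An alternative, more hands-on route avoids the extension theorem: one chooses a generic linear projection $\pi:\CC^n\to\CC^d$ so that $\pi|_Z$ is a proper branched covering of some finite degree $k$, expresses the complementary coordinates of the fibre points through the elementary symmetric functions $\sigma_1,\dots,\sigma_k$, which extend to entire functions on $\CC^d$ by Riemann's extension theorem across the branch locus, and then shows that the volume bound forces each $\sigma_j$ to have polynomial growth and hence---by a Cauchy-estimate Liouville argument---to be a polynomial, so that $Z$ is cut out by polynomial equations. In this approach the main obstacle shifts to proving that a generic projection is proper with a bounded number of sheets, which is again precisely where the volume hypothesis enters. Either way, the crux is the passage from Euclidean volume growth to a finite invariant at infinity, and I would follow Stolzenberg~\cite{stvol} for the details of whichever route is cleanest.
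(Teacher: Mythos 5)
The first thing to say is that the paper does not prove this theorem at all: Theorem~\ref{thm-bs} is taken as a classical black box, with the statement cited to \cite[p.~2, Theorem D]{stvol} and the proof to \cite[Chapter IV]{stvol}. So your proposal is not competing with an argument in the paper but with the classical literature. Your overall architecture --- take the closure $\overline{Z}$ in $\mathbb{P}^n$, prove locally finite $\mathcal{H}^{2d}$-measure (equivalently, Fubini--Study volume) near the hyperplane $H$ at infinity, invoke Bishop's extension theorem \cite{bishop}, and finish with Chow --- is indeed a standard and correct route to Stoll's theorem, and your reduction of the whole problem to finiteness of the FS volume at infinity is the right reduction.

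However, the step you yourself flag as the heart of the matter has a genuine gap, and the concrete argument you offer for it fails. Pointwise one only has $\omega_{\mathrm{FS}}\le(1+|z|^2)^{-1}\omega_0$, so on the shell $\{2^k\le |z|<2^{k+1}\}$ the hypothesis $\vol_{2d}Z(r)=O(r^{2d})$ gives an FS-volume bound of at most $C\,2^{-2kd}\cdot 2^{2d(k+1)}=C\,2^{2d}$: a \emph{constant} per shell, so the dyadic sum diverges logarithmically. This is not an artifact of sloppy constants --- already for $Z=\CC^d\times\{0\}$ the pointwise comparison gives a divergent integral even though the true FS volume is finite. Nor does Lelong monotonicity of $\nu(t)=\vol_{2d}Z(t)/t^{2d}$ rescue a shell-by-shell estimate: with monotonicity the $k$-th shell term is bounded \emph{below} by $(2^{2d}-1)\nu(2^k)$, which stays bounded away from zero. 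In fact no argument that uses only the measure-theoretic growth hypothesis can close this step, because the implication ``$\mathcal{H}^{2d}$-growth $O(r^{2d})$ implies finite FS volume'' is false for non-analytic sets: the union over $k$ of the round $2d$-spheres of radius $2^k$ centered at $0$ inside a fixed real $(2d+1)$-dimensional subspace $\CC^d\times\mathbb{R}\subset\CC^{d+1}$ has $\mathcal{H}^{2d}$-growth $O(r^{2d})$, yet each sphere contributes FS volume bounded below by a positive constant (a cap around the pole $(0,\dots,0,2^k)$, where the tangent plane is a complex-tangential $d$-plane, has Euclidean area $\sim 2^{2kd}$ and FS density $\sim 2^{-2kd}$), so the FS volume is infinite. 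The analyticity of $Z$ must therefore enter this step a second time, beyond Wirtinger: the standard mechanism is Stokes' theorem applied to the closed positive current of integration $[Z]$ (the Lelong--Jensen formula), which expresses $\int_{Z(r)}\omega_{\mathrm{FS}}^d$ as an integral of $\nu(t)$ against a weight comparable to $t^{2d-1}(1+t^2)^{-(d+1)}\,dt$, integrable at infinity; boundedness of $\nu$ then gives finiteness. Your parenthetical appeal to ``Lelong monotonicity'' gestures at the right circle of ideas, but the argument you actually describe is the one that diverges, so the proof is incomplete at its central point. (Your alternative route via proper projections and symmetric functions is essentially the approach of \cite{stvol}; there the same issue reappears as the nontrivial claim that the volume bound forces a generic projection to be proper with finitely many sheets.)
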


  See~\cite[p.~2, Theorem D]{stvol} for the statement.  A proof is given
  in~\cite[Chapter IV]{stvol}.  
  According to Cornalba and Griffiths~\cite[E.
  4.2]{crgr},  the converse also holds.  
  (This also follows directly from the main result of this note, Theorem~\ref{volest} below.)

  \section{Volumes of Definable Sets}\label{sec-voldef}

  My main goal in this letter is to prove the Peterzil--Starchenko GAGA theorem
  (Theorem~\ref{thm-ps} below) using Theorem~\ref{thm-bs} and a general fact
  about definable sets and Hausdorff measures.  
  To explain this general fact, let me first explain cells.  To do
  this, fix an o-minimal expansion $\mathbb{R}_{\mathrm{alg},*}$  of
  $\mathbb{R}_{\mathrm{alg}}$. 

  \subsection{Cells}  These are certain special
  definable subsets (with respect to $\mathbb{R}_{\mathrm{alg},*}$) of
  $\mathbb{R}^n$ defined inductively.  See page 50 of~\cite{vdd} for a
  complete definition, but, roughly speaking, cells in $\mathbb{R}^n$
  are defined inductively with respect to $n$ as either
  \begin{enumerate}
  \item[(a)] graphs of continuous definable functions $f$ on
    cells in $\mathbb{R}^{n-1}$ or,
  \item[(b)] nonempty open regions in between graphs of continuous definable
  functions in $\mathbb{R}^{n-1}$.  
  \end{enumerate} 
  There is a dimension function $d$ defined inductively on the set of all cells
  by setting $d(S)=d(T)$ if $S\subset\mathbb{R}^n$ is constructed inductively
  from $T\subset\mathbb{R}^{n-1}$ via procedure (a) and setting $d(S)=d(T)+1$ if
  it is constructed via procedure (b).  Moreover, as a consequence of the cell
  decomposition theorem~\cite[2.11 on p.~52]{vdd}, every definable subset of
  $\mathbb{R}^n$ can be written as a finite disjoint union of cells.  (This is
  one of the most crucial properties of definable sets in o-minimal structures.)
  If $X$ is then any definable subset of $\mathbb{R}^n$, van den Dries defines
  $\dim X$ to be the maximum of the dimensions $d(S)$ as $S$ ranges over all
  cells contained in $X$~\cite[p.~63]{vdd}.  Since, by~\cite[p.~64]{vdd}, $\dim
  (X\cup Y)=\max(\dim X,\dim Y)$, $\dim X$ is also the maximum of the dimensions
  $d(S)$ of the cells $S$ appearing in a decomposition of $X$ into disjoint
  cells.

  Now, I am ready to state the main volume estimate of this paper. 
  As mentioned in the introduction, this theorem is a special
  case of Proposition 3.1 of~\cite{nv}.
  Moreover, the paper~\cite{KurdykaRaby} by Kurdyka and Raby proves
  an equivalent result in the language of subanalytic subsets.

\begin{theorem}\label{volest}
  Suppose $S\subset\mathbb{R}^n$ is a set  which is definable in an 
  o-minimal expansion of $\mathbb{R}_{\mathrm{alg}}$.  Set $d=\dim S$.   Then
  $$
  \vol_d S(r)=O(r^d).
  $$
\end{theorem}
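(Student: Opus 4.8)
The plan is to bound $\vol_d S(r)$ by controlling, for each of the finitely many orthogonal projections onto a $d$-dimensional coordinate subspace, the number of points in a generic fibre, and to exploit that such a projection is $1$-Lipschitz so that it confines $S(r)$ to a ball of radius $r$ in the target. Since the dimension of a definable set is a non-negative integer, $d=\dim S\in\mathbb{Z}_{\ge 0}$, and for integer $d$ the quantity $\vol_d$ is the usual $d$-dimensional area, to which the area formula of geometric measure theory applies; I will use this throughout and will repeatedly discard lower-dimensional pieces, which contribute nothing to $\vol_d$.

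First I would reduce to the case where $S$ is a single $d$-dimensional cell. By the cell decomposition theorem, $S$ is a finite disjoint union of cells $C_i$, and by subadditivity $\vol_d S(r)\le\sum_i\vol_d C_i(r)$; cells of dimension $<d$ satisfy $\vol_d=0$, so it suffices to treat a cell of dimension exactly $d$. Passing to the $C^1$-cell decomposition available in any o-minimal structure, I may further assume that, off a subset of dimension $<d$, such a cell is an embedded $d$-dimensional $C^1$ submanifold of $\mathbb{R}^n$; in particular $S$ is $d$-rectifiable and the area formula is available.

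The geometric heart of the argument is a finite Cauchy--Crofton inequality using only coordinate projections. For each $d$-element subset $\lambda\subset\{1,\dots,n\}$, let $p_\lambda\colon\mathbb{R}^n\to\mathbb{R}^d_\lambda$ be the orthogonal projection onto the corresponding coordinate $d$-plane. At a point of $S$ with unit simple tangent $d$-vector $\xi$, the tangential Jacobian of $p_\lambda$ equals the absolute value $|\xi_\lambda|$ of the corresponding Plücker coordinate of $\xi$; since $\sum_\lambda|\xi_\lambda|=\lVert\xi\rVert_1\ge\lVert\xi\rVert_2=1$, summing the area formula over all $\lambda$ gives
$$
\vol_d(S)\ \le\ \sum_{\lambda}\int_{\mathbb{R}^d_\lambda}\#\bigl(S\cap p_\lambda^{-1}(y)\bigr)\,dy .
$$
Applying this with $S(r)$ in place of $S$, two facts finish the proof. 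First, each $p_\lambda$ is $1$-Lipschitz, so $p_\lambda(S(r))$ lies in the ball of radius $r$ in $\mathbb{R}^d_\lambda$, of volume $\omega_d r^d$ where $\omega_d$ is the volume of the unit $d$-ball. Second, and this is the one genuinely o-minimal ingredient, the fibres $S\cap p_\lambda^{-1}(y)$ form a definable family of subsets of $\mathbb{R}^{n-d}$ parametrized by $y$, so by the uniform finiteness theorem there is an integer $N_\lambda$ bounding the cardinality of every finite fibre; and because $\dim S=d$, all but an $\mathcal{H}_d$-null set of $y$ give a finite fibre. Hence each term is at most $N_\lambda\,\omega_d\,r^d$, and summing over the $\binom{n}{d}$ subsets $\lambda$ yields $\vol_d S(r)\le C r^d$, which is the assertion.

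I expect the main obstacle to be the justification of this Cauchy--Crofton inequality, namely ensuring that $S$ is rectifiable enough for the area formula to apply and that the singular and lower-dimensional loci are negligible; this is precisely where the $C^1$-cell decomposition of o-minimal structures is used. By contrast, the uniform finiteness of the fibres is a clean consequence of cell decomposition, and it is the $1$-Lipschitz confinement of the projection into a ball of radius $r$ that produces the sharp exponent $d$.
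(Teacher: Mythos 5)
Your argument is correct, but it takes a genuinely different route from the paper's. The paper runs an induction on $d$ and forces each projection to have multiplicity one: it covers the compact Grassmannian $\Gr(d,n)$ by finitely many definable neighborhoods $U_L$ on which tangent planes are nearly parallel to $L$, pulls these back along the Gauss map $x\mapsto T_xS$, and on each resulting piece the orthogonal projection onto $L$ is an injective graph map whose Jacobian is within a factor of $2$ of $1$ (Lemma~\ref{l.volcell}); the estimate then follows because the image lies in a ball of radius $r$ inside $L$. You instead keep $S$ whole and let the multiplicity float: you sum the area formula over the $\binom{n}{d}$ coordinate projections, dominate the constant $1$ by the total Jacobian via $\sum_\lambda|\xi_\lambda|=\lVert\xi\rVert_1\ge\lVert\xi\rVert_2=1$, and control the fibre count with the o-minimal uniform finiteness theorem---correctly applied, since the locus of $y$ with finite fibre is definable, and its complement is Lebesgue-null by the fibre-dimension theorem because $\dim S=d$. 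Your Cauchy--Crofton approach buys: no induction on dimension, no Grassmannian or Gauss map at all, and an explicit constant $\omega_d\sum_\lambda N_\lambda$; it is also closer in spirit to the integral-geometric arguments of Kurdyka--Raby and Nguyen--Valette that the paper cites. Its cost is heavier input: the area formula with multiplicity for countably rectifiable sets (this is where your $C^1$-cell reduction is genuinely needed, together with measurability of $y\mapsto\#\bigl(S(r)\cap p_\lambda^{-1}(y)\bigr)$, which holds because each level set $\{y:\#\bigl(S(r)\cap p_\lambda^{-1}(y)\bigr)=k\}$ is definable), plus the uniform finiteness and fibre-dimension theorems; the paper gets by with the elementary change-of-variables formula for $C^2$ graphs, with multiplicity one and no fibre counting whatsoever.
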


\subsection{Peterzil--Starchenko}\label{sec-ps}  Before proving
Theorem~\ref{volest}, I want to use it to prove the Peterzil--Starchenko
GAGA theorem.

\begin{theorem}[Peterzil--Starchenko]
\label{thm-ps}Let $A$ be a closed, complex analytic subset of $\CC^n$, which is
definable with respect to an o-minimal expansion of
$\mathbb{R}_{\mathrm{alg}}$.  
Then $A$ is an algebraic subset of $\CC^n$. 
\end{theorem}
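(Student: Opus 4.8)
The plan is to combine the two results already in hand --- Stoll's theorem (Theorem~\ref{thm-bs}) and the volume estimate (Theorem~\ref{volest}) --- with a reduction of $A$ to pure-dimensional analytic pieces. The observation driving everything is that Stoll's theorem demands pure dimension together with a volume bound, that the volume bound is handed to us for free by Theorem~\ref{volest} once we know the real dimension, and that a complex analytic set of pure complex dimension $k$ has real dimension exactly $2k$. So the only content beyond the two cited theorems is the passage from an arbitrary, possibly non-equidimensional, definable analytic set to pure-dimensional definable analytic pieces.

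First I would decompose. Let $d=\dim_{\CC} A$ and, for $0\le k\le d$, let $A^{(k)}$ denote the union of the $k$-dimensional irreducible components of $A$. Standard complex-analytic structure theory gives that the decomposition of a closed analytic set into irreducible components is locally finite, that each $A^{(k)}$ is again a closed analytic subset of $\CC^n$ of pure complex dimension $k$, and that $A=\bigcup_{k=0}^{d}A^{(k)}$ (a finite union, since there are only finitely many possible dimensions).

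Next I would show that each $A^{(k)}$ is definable, which is the crux. The idea is to realize $A^{(k)}$ as the closure of a definable set. Put $A_{2k}:=\{x\in A:\dim_x A=2k\}$, the locus of points of local real dimension $2k$. By o-minimal dimension theory this level set is definable in the same o-minimal expansion of $\mathbb{R}_{\mathrm{alg}}$, hence so is its closure. I would then argue $\overline{A_{2k}}=A^{(k)}$: a generic point of a $k$-dimensional component lies on no component of larger dimension (the higher-dimensional components and the pairwise intersections cut out nowhere-dense analytic subsets, and locally there are only finitely many of them), so it has local complex dimension $k$, i.e.\ local real dimension $2k$; this gives $A^{(k)}\subseteq\overline{A_{2k}}$. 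Conversely a point of local complex dimension $k$ lies on some $k$-dimensional component, so $A_{2k}\subseteq A^{(k)}$, and since $A^{(k)}$ is closed we get $\overline{A_{2k}}\subseteq A^{(k)}$. Thus $A^{(k)}=\overline{A_{2k}}$ is definable.

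Finally I would apply the two theorems. Since $A^{(k)}$ is a closed analytic subset of pure complex dimension $k$, its real dimension is $2k$, so Theorem~\ref{volest} yields $\vol_{2k}\bigl(A^{(k)}(r)\bigr)=O(r^{2k})$, and Theorem~\ref{thm-bs} then shows $A^{(k)}$ is algebraic. As $A=\bigcup_{k=0}^{d}A^{(k)}$ is a finite union of algebraic sets, $A$ is algebraic. I expect the main obstacle to be exactly the middle step --- cleanly identifying the definable closure $\overline{A_{2k}}$ with the analytic pure-dimensional part $A^{(k)}$ --- since this is the one place where o-minimal dimension theory must be reconciled with the complex-analytic notion of irreducible component; the facts on either side are standard, but matching them is where the real work lies.
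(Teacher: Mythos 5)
Your proof is correct, and its core is the same as the paper's: feed the o-minimal volume bound of Theorem~\ref{volest} into Stoll's theorem (Theorem~\ref{thm-bs}). The difference is what you do before that step, and it is a genuine difference. The paper's proof is three lines: it notes that a definable closed analytic set $Z$ of complex dimension $d$ has definable dimension $2d$, gets $\vol_{2d}Z(r)=O(r^{2d})$ from Theorem~\ref{volest}, and cites Theorem~\ref{thm-bs} --- thereby applying Stoll's theorem, which is stated only for sets of \emph{pure} dimension $d$, to a $Z$ that is merely assumed to have dimension $d$. Your reduction to the equidimensional pieces $A^{(k)}$ fills exactly this elision. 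The key extra ingredient you supply --- that $A^{(k)}$ is definable because it coincides with the closure of the definable locus $A_{2k}=\{x\in A:\dim_x A=2k\}$ of points of local dimension $2k$ --- appears nowhere in the paper, and it is the right tool: one cannot instead extract a $\vol_{2k}$-bound for $A^{(k)}$ from the bound for $A$, since any component of dimension $>k$ has infinite $2k$-dimensional measure, so definability of each pure-dimensional part really is needed before Theorem~\ref{volest} can be invoked on it. The cost of your route is reliance on two standard-but-nontrivial inputs (local finiteness of the decomposition into irreducible components, and definability of local-dimension level sets in an o-minimal structure), plus the bridge identifying local o-minimal dimension with twice local complex dimension, which you correctly flag as the crux; note that the paper needs essentially the same bridge (via its definable dense open submanifold $U\subseteq Z$), so you are no worse off there. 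In short: same strategy, but your version handles the non-equidimensional case that the paper's proof passes over in silence.
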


\begin{proof}[Proof of Theorem~\ref{thm-ps} using Theorem~\ref{volest}]
  Take a closed complex analytic subset $Z\subset\CC^n$ of complex dimension
  $d$, and assume $Z$ is definable. Then $Z$ has a definable dense open subset 
  $U$ which is submanifold of $\CC^n$.  It follows that the dimension
  of $Z$ as a definable subset of $\CC^n$ (in the 
  sense of~\cite[Definition 4.1.1]{vdd}) is $2d$. Then, by Theorem~\ref{volest}, we
  have $\vol_{2d} Z(r)=O(r^{2d})$. So, by Theorem~\ref{thm-bs}, $Z$ is algebraic.
\end{proof}

In the next section, I prove Theorem~\ref{volest}. 
The elementary proof mainly relies on the Gauss map and change of variables.

\section{Proof of the Volume Estimate}\label{s.proof}
In what follows it will be convenient to note that, since linear transformations between
finite dimensional real vector spaces are definable in any expansion of 
$\mathbb{R}_{\mathrm{alg}}$, any finite dimensional real vector space $V$ 
comes equipped with a canonical definable structure (via any linear isomorphism 
to $\mathbb{R}^{\dim V}$).     

If $X$ is a definable subset of $\mathbb{R}^n$ of dimension $d$, we say that
\emph{Theorem~\ref{volest} holds for $X$} if $\vol_d X(r)=O(r^d)$.  
Note that, if Theorem~\ref{volest} holds for $X$, then, for $d'>d$, we have
$\vol_{d'} X(r)=0$ for all $r$. 
(See~\cite[\S2.10.2]{federer}.)
For each nonnegative integer $d$, write $P(d)$ for the assertion that
Theorem~\ref{volest} holds for all definable sets $X$ of dimension $\leq d$.
The goal of the section is to prove $P(d)$ for each nonnegative integer $d$
by induction on $d$.  
Since zero dimensional definable sets are finite, $P(0)$
obviously holds.

Write $\Gr(d,n)$ for the Grassmannian of real $d$-dimensional planes
through the origin in $\mathbb{R}^n$.  
The set $\Gr(d,n)$ has a natural structure of a definable $C^{\infty}$-manifold.
In the language of~\cite[Chapter 10]{vdd}, $\Gr(d,n)$ is a definable space, 
which is also (compatibly) a compact $C^{\infty}$-manifold.  
(See also ~\cite{FischerAdvMath} for a precise definition.)
As it is also a regular space,
\cite[Theorem 10.1.8]{vdd} implies that it is isomorphic (as a definable space)
to an affine definable space, i.e., a definable subset of $\mathbb{R}^n$.
However, the method used in~\cite[Example 10.1.4]{vdd} to show that 
$\mathbb{P}^n(\mathbb{R})$ is affine, can be imitated to realize $\Gr(d,n)$
as a closed definable $C^{\infty}$-submanifold of $\mathbb{R}^N$ for some 
suitable $N$.

To be explicit about this last point, 
endow $\mathbb{R}^n$ with the usual dot product.  
For each $L\in\Gr(d,n)$, choose an ordered basis $\tilde L=(\ell_1,\ldots, \ell_d)$, and
set $\wedge^d \tilde L:=\ell_1\wedge\cdots\wedge\ell_d$.  
Then let $\phi(L)$ denote the point
\[
  \phi(L):=\frac{1}{\lVert \wedge^d \tilde L\lVert^2}(\wedge^d \tilde L)\otimes (\wedge^d \tilde L)
           \in \Sym^2(\wedge^d \mathbb{R}^n).
\]
The resulting map $\phi:\Gr(d,n)\to \Sym^2(\wedge^d \mathbb{R}^n)$ is a well-defined
and definable smooth morphism, embedding $\Gr(d,n)$ as a closed, definable, smooth
submanifold of $\Sym^2(\wedge^d\mathbb{R}^n)$.  
So we can identify $\Gr(d,n)$ with the image of $\phi$. 

We can also view $\Gr(d,n)$ as a quotient of the orthogonal group
$\Orth(n)$ in the usual way.  
Note that the 
orthogonal group $\Orth(n)$ (of real $n\times n$-matrices 
which are orthogonal with respect to the standard inner product) is a closed
definable and smooth submanifold of $\mathbb{R}^{n^2}$.
Moreover, it is a group in the category of definable spaces. 
It acts definably, properly and transitively on the space $\Gr(d,n)$.   
The stabilizer of $L\in\Gr(d,n)$ is the definable, closed subgroup 
$\Orth(L)\times\Orth(L^{\perp})$, 
which is definably isomorphic to $\Orth(d)\times\Orth(n-d)$.   
From this, it is not hard to see that, 
in the language of~\cite[p.~162]{vdd}, 
$\Gr(d,n)$ is a definably proper quotient of $\Orth(n)$, 
and, in fact,
$\Gr(d,n)$ 
is definably isomorphic to the quotient $\Orth(n)/(\Orth(d)\times\Orth(n-d))$. 

If $L\in\Gr(d,n)$ is a $d$-dimensional linear subspace,
we write $\pi_L:\mathbb{R}^n\to L$ for the orthogonal projection onto
the subspace $L$.  
The map
$z\mapsto (\pi_L(z),\pi_{L^{\perp}}(z))$ 
is then an isometric (and, thus, volume-preserving) definable isomorphism from 
$\mathbb{R}^n$ to $L\times L^{\perp}$. 

\begin{lemma}\label{l.volcell}
  Suppose $L\in\Gr(d,n)$. There exists a definable open neighborhood $U_L$ of $L$
  in $\Gr(d,n)$ such that the following two statements hold:
  \begin{enumerate}
  \item $\pi_L(L')=L$ for all $L'\in U_L$.
  \item Suppose $C=\{(x,y)\in D\times L^{\perp}: y=f(x)\}$ where
    $D\subset L$ is a definable $d$-dimensional cell and $f:D\to L^{\perp}$
    is a $C^2$ definable function.  Assume that the tangent
    space $T_zC$ is in $U_L$
    for all $z=(x,y)\in C$.  Then, for all $r\in\mathbb{R}$,  
    $\vol_d C(r)\leq 2\vol_d D(r)$.  
  \end{enumerate}
\end{lemma}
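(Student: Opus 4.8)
The plan is to choose $U_L$ so that it simultaneously encodes both required properties: that $\pi_L$ restricts to an isomorphism on $L'$, and that the ``tilt'' of $L'$ relative to $L$ is small enough to keep a Jacobian factor below $2$. For part (1), note that for $L'\in\Gr(d,n)$ the condition $\pi_L(L')=L$ is equivalent to $L'\cap L^{\perp}=\{0\}$, since $\ker(\pi_L|_{L'})=L'\cap L^{\perp}$ and $\dim L'=\dim L=d$ forces injectivity and surjectivity to coincide. This is an open, definable condition holding at $L$, and on the open set where it holds each $L'$ is the graph of a unique linear map $A_{L'}:L\to L^{\perp}$, namely $A_{L'}=\pi_{L^{\perp}}\circ(\pi_L|_{L'})^{-1}$. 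The assignment $L'\mapsto A_{L'}$ is definable and smooth, with $A_L=0$.

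The heart of the argument is the Gauss map together with the area formula. For $C=\{(x,f(x)):x\in D\}$ as in (2), I would parametrize $C$ by $\phi(x)=(x,f(x))$ and observe that the tangent space $T_{\phi(x)}C$ is exactly the graph of the differential $Df(x):L\to L^{\perp}$; thus under the identification above the hypothesis $T_{\phi(x)}C\in U_L$ corresponds to $A_{L'}=Df(x)$. The area formula gives
\[
  \vol_d C=\int_D\sqrt{\det\bigl(I+(Df)^{\top}Df\bigr)}\,dx,
\]
so the integrand at $x$ is precisely $\sqrt{\det(I+A^{\top}A)}$ with $A=Df(x)$ the linear map whose graph is $T_{\phi(x)}C$. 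Accordingly I define $U_L$ to be the open definable set of $L'$ with $\pi_L(L')=L$ and $\det(I+A_{L'}^{\top}A_{L'})<4$. Since this determinant equals $1$ at $L'=L$ (where $A_L=0$), continuity guarantees $U_L$ is a genuine neighborhood of $L$, and by construction every $L'\in U_L$ satisfies $\sqrt{\det(I+A_{L'}^{\top}A_{L'})}<2$.

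To pass from total volume to volume inside the ball $B(r)$, I would use that $\phi$ does not decrease the length of the base coordinate: $|\phi(x)|^2=|x|^2+|f(x)|^2\geq|x|^2$. Hence $\phi^{-1}(C(r))=\{x\in D:|x|^2+|f(x)|^2<r^2\}\subseteq\{x\in D:|x|<r\}=D(r)$. Restricting the area formula to $\phi^{-1}(C(r))$ and applying the pointwise bound from the hypothesis $T_zC\in U_L$ then yields
\[
  \vol_d C(r)=\int_{\phi^{-1}(C(r))}\sqrt{\det\bigl(I+(Df)^{\top}Df\bigr)}\,dx\leq 2\,\vol_d\bigl(\phi^{-1}(C(r))\bigr)\leq 2\,\vol_d D(r),
\]
which is statement (2).

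The genuinely routine parts are the area-formula identity and the norm inequality. The step that needs the most care, and which I regard as the main obstacle, is verifying that $U_L$ can be chosen simultaneously open, definable, and small enough: one must check that the graph coordinate $L'\mapsto A_{L'}$ is definable and continuous near $L$, so that the determinant threshold really cuts out a definable neighborhood, and that ``$T_zC\in U_L$'' translates \emph{exactly} into the pointwise Jacobian bound via the Gauss-map identification. The constant $2$ plays no special role; any bound strictly larger than $1$ would serve, and the threshold $4=2^2$ is chosen only to make the determinant condition match.
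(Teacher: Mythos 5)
Your proof is correct and follows essentially the same route as the paper's: parametrize $C$ as a graph, bound $\vol_d C(r)$ by the area-formula integral $\int\sqrt{\det(I+(Df)^{\top}Df)}$ over a subset of $D(r)$, and choose $U_L$ so that the hypothesis $T_zC\in U_L$ forces that Jacobian factor below $2$. The only difference is that you make explicit (via the determinant threshold $\det(I+A_{L'}^{\top}A_{L'})<4$ in the graph chart) the choice of $U_L$ that the paper leaves implicit in the phrase ``for $T_zC$ sufficiently close to $L$, \dots the lemma follows easily.''
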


\begin{proof}
  Since the group $\Orth(n)$ acts transitively on $\Gr(d,n)$ and 
  preserves the metric (and hence the volume form) on $\mathbb{R}^n$, 
  we can assume $L=\mathbb{R}^d=\{(x_1,\ldots, x_n)\in\mathbb{R}^n:
  x_i=0$ for $i=d+1,\ldots, n\}$.  
  Write $e_i$ for the tangent vector
  $\partial/\partial x_i$, and write $\Phi:D\to\mathbb{R}^n$ for the
  map $x\mapsto (x,f(x))$.  
  Let  $A(x)=Df(x)$ denote the derivative of $f$ at $x$.  
  For $z=(x,f(x))\in C$, the tangent space $T_z C$ is the $d$-dimensional space
  generated by the vectors $v_i=e_i+A(x)e_i\in\mathbb{R}^n$.  
  For $r\in\mathbb{R}$, $\pi_L(C(r))\subset D(r)$.  
  Therefore, 
  $\vol_d C(r)\leq\int_{D(r)} \sqrt{\det g_{ij}(x)}\, dx$ 
  where $g_{ij}(x)$ is the matrix 
  $v_i\cdot v_j=\delta_{ij}+e_i\cdot A(x)^*A(x)e_j$.  
  (See~\cite[\S3.2.46]{federer} for the relevant formula computing the  volume 
  in terms of $g_{ij}$.) 
  For $T_z C$ sufficiently close to $L$, the matrix $A(x)$ will be close to $0$.  
  Therefore, $g_{ij}(x)$ will be close to the identity matrix.  
  From these considerations, the lemma follows easily.  
\end{proof}

  For the rest of the section, pick definable neighborhoods $U_L$ for each
  $L\in\Gr(d,n)$ once and for all.  
  For each definable set $X$, we let $\Reg^2
  X$ denote the locus in $X$ consisting of all points $x\in X$ such that
  there is an open subset $U$ of $\mathbb{R}^n$ such that $U\cap X$ is a
  $C^2$-manifold.  
  This is a definable subset of $X$, and the complement
  $X\setminus\Reg^2 X$ has dimension strictly less than the dimension of
  $X$.
  (This follows from the Cell Decomposition Theorem of \cite[\S 4.2]{vdd-m}.)

\begin{corollary}\label{c.volflat}
  Suppose $L\in\Gr(d,n)$ and $M$ is a $d$-dimensional $C^2$ definable
  submanifold of $\mathbb{R}^n$ such that $T_xM\in U_L$ for each $x\in M$.
  Then, assuming that $P(k)$ holds for $k<d$, we have
  $\vol_d M(r)=O(r^d)$.
\end{corollary}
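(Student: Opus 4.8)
The plan is to use the orthogonal projection $\pi_L$ to cut $M$ into finitely many graphs over cells in $L$ and then invoke Lemma~\ref{l.volcell}. First I would observe that $\pi_L$ restricts to a local diffeomorphism on $M$. Indeed, for each $x\in M$ the differential of $\pi_L|_M$ at $x$ is the linear map $\pi_L|_{T_xM}\colon T_xM\to L$, and since $T_xM\in U_L$, part (1) of Lemma~\ref{l.volcell} gives $\pi_L(T_xM)=L$; as $\dim T_xM=d=\dim L$, this map is an isomorphism. Hence $\pi_L|_M$ is simultaneously an immersion and a submersion onto $L$, and its fibers are discrete, hence (by o-minimality) finite.

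Next I would choose coordinates on $\mathbb{R}^n\cong L\times L^{\perp}$ so that $L$ is the first $d$ coordinates, and apply the $C^2$ cell decomposition theorem (\cite[\S4.2]{vdd-m}) to write $M$ as a finite disjoint union of $C^2$ cells $M_j$. Because the cylindrical cell decomposition is automatically compatible with the projection $\pi_L$ onto the first $d$ coordinates, each $D_j:=\pi_L(M_j)$ is a cell in $L$. For a cell $M_j$ of dimension $d$ one has $T_zM_j=T_zM\in U_L$ for all $z\in M_j$, so by the previous paragraph $\pi_L|_{M_j}$ is a local diffeomorphism onto a $d$-dimensional subset of $L$; compatibility then forces $D_j$ to be an open $d$-dimensional cell and $M_j$ to be the graph of a $C^2$ definable function $f_j\colon D_j\to L^{\perp}$, with $\pi_L|_{M_j}$ a bijection onto $D_j$. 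Every remaining cell has dimension $<d$.

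Now I would bound the pieces. For each top-dimensional cell, Lemma~\ref{l.volcell}(2) applies with $C=M_j$, $D=D_j$, $f=f_j$ (the hypothesis $T_zC\in U_L$ holds since $M_j\subset M$), giving $\vol_d M_j(r)\leq 2\vol_d D_j(r)$. Since $D_j\subset L\cong\mathbb{R}^d$, the slice $D_j(r)$ is contained in the $d$-ball of radius $r$ inside $L$, so $\vol_d D_j(r)\leq \omega_d r^d$, where $\omega_d$ is the volume of the unit $d$-ball; hence $\vol_d M_j(r)=O(r^d)$. For each lower-dimensional cell $M_j$, the inductive hypothesis $P(\dim M_j)$ (valid because $\dim M_j<d$) makes $\vol_{\dim M_j}M_j(r)$ finite, whence $\vol_d M_j(r)=0$ by \cite[\S2.10.2]{federer}. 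Summing the finitely many contributions and using subadditivity of Hausdorff measure gives $\vol_d M(r)\leq\sum_j\vol_d M_j(r)=O(r^d)$, as required.

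The main obstacle is the identification carried out in the second paragraph: one must know that the cell decomposition can be taken of class $C^2$ and compatible with the fixed projection $\pi_L$, and then recognize that the condition $T_zM\in U_L$ upgrades each top-dimensional cell from a mere local diffeomorphism to an honest graph over an open cell of $L$ on which $\pi_L$ is \emph{injective}. Once this is in place, the remaining steps are a direct appeal to Lemma~\ref{l.volcell} together with the trivial volume bound for subsets of $\mathbb{R}^d$, and the hypothesis $P(k)$ for $k<d$ enters only to discard the lower-dimensional cells.
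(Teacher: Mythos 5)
Your proof is correct and takes essentially the same route as the paper's: apply a $C^2$ cell decomposition, discard the cells of dimension $<d$ using the hypothesis $P(k)$ for $k<d$, recognize each $d$-dimensional cell as the graph of a $C^2$ definable function over an open cell in $L$ (thanks to the tangent condition and compatibility of cells with $\pi_L$), and conclude with Lemma~\ref{l.volcell}(2) plus the trivial bound $\vol_d D(r)\leq \omega_d r^d$. The only differences are expository: the paper reduces immediately to a single cell and treats the graph identification as evident, whereas you keep all cells, sum at the end, and spell out the local-diffeomorphism/injectivity argument in more detail.
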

\begin{proof}
  We can write $M$ as a finite union of cells of dimension $\leq d$.
  Moreover, we can assume that $L=\{x\in\mathbb{R}^n:x_i=0$ for $i>d\}$. 
  Then, using the assumption that $P(k)$ holds for $k<d$, we can
  assume that $M$ is, in fact, a single cell.  
  Since $D\pi_L(x):T_xM\to L$ is onto for all $x\in M$, 
  we see easily that $M$ has the form of the subset 
  $C$ in Lemma~\ref{l.volcell}.  
  So $M=\{(x,y)\in D\times L^{\perp}:
  y=f(x)\}$ with $f$ and $D$ as in Lemma~\ref{l.volcell}. 
  As $D$ is a $d$-dimensional cell in $\mathbb{R}^d$, it is open.  
  So it is obvious that $\vol_d D(r)=O(r^d)$.   
  Then the volume estimate for $M$ follows from Lemma~\ref{l.volcell}.
\end{proof}

\begin{proof}[Proof of Theorem~\ref{volest}]
  Suppose $P(k)$ holds for $k<d$ and that
  $S$ is a $d$-dimensional definable set.  We can write $S$ as a finite
  union of $C^2$-cells, so, since we are assuming $P(k)$ holds for $k<d$,
  we can assume that $S$ is itself a $C^2$ cell. In particular $S$ is
  a $C^2$ submanifold of $\mathbb{R}^n$.

  Write $\Gamma:S\to\Gr(d,n)$
  for the Gauss map sending $x\in S$ to $T_xS$.  Then $\Gamma$ is definable
  and continuous.  So cover $\Gr(d,n)$ with finitely many opens of the
  form $U_{L_i}$ for $i=1,\ldots, m$.  Then each set $S_i:=\Gamma^{-1}(U_{L_i})$
  is definable open in $S$.   
  In particular, $S_i$ is a $C^2$ submanifold
  of $\mathbb{R}^n$ and $T_xS_i\in U_{L_i}$ for all $i$ and for each $x\in S_i$.   
  The fact that $
  \vol_d S_i(r)=O(r^d)$ then follows form Corollary~\ref{c.volflat}.
  Since $m<\infty$, this proves the theorem. 
\end{proof}

\bibliographystyle{plain}

\begin{thebibliography}{10}

\bibitem{bkt}
B.~Bakker, B.~Klingler, and J.~Tsimerman.
\newblock Tame topology of arithmetic quotients and algebraicity of {H}odge
  loci.
\newblock {\em J. Amer. Math. Soc.}, 33(4):917--939, 2020.

\bibitem{bbt}
Benjamin {Bakker}, Yohan {Brunebarbe}, and Jacob {Tsimerman}.
\newblock {o-minimal GAGA and a conjecture of Griffiths}.
\newblock {\em arXiv e-prints}, November 2018.

\bibitem{bishop}
Errett Bishop.
\newblock Conditions for the analyticity of certain sets.
\newblock {\em Michigan Math. J.}, 11:289--304, 1964.

\bibitem{cdk}
Eduardo Cattani, Pierre Deligne, and Aroldo Kaplan.
\newblock On the locus of {H}odge classes.
\newblock {\em J. Amer. Math. Soc.}, 8(2):483--506, 1995.

\bibitem{crgr}
Maurizio Cornalba and Phillip Griffiths.
\newblock Analytic cycles and vector bundles on non-compact algebraic
  varieties.
\newblock {\em Invent. Math.}, 28:1--106, 1975.

\bibitem{federer}
Herbert Federer.
\newblock {\em Geometric measure theory}.
\newblock Die Grundlehren der mathematischen Wissenschaften, Band 153.
  Springer-Verlag New York Inc., New York, 1969.

\bibitem{FischerAdvMath}
Andreas Fischer.
\newblock Smooth functions in o-minimal structures.
\newblock {\em Adv. Math.}, 218(2):496--514, 2008.

\bibitem{KurdykaRaby}
K.~Kurdyka and G.~Raby.
\newblock Densit\'{e} des ensembles sous-analytiques.
\newblock {\em Ann. Inst. Fourier (Grenoble)}, 39(3):753--771, 1989.

\bibitem{marker}
David Marker.
\newblock {\em Model theory}, volume 217 of {\em Graduate Texts in
  Mathematics}.
\newblock Springer-Verlag, New York, 2002.

\bibitem{nv}
Nhan Nguyen and Guillaume Valette.
\newblock Whitney stratifications and the continuity of local
  {L}ipschitz-{K}illing curvatures.
\newblock {\em Ann. Inst. Fourier (Grenoble)}, 68(5):2253--2276, 2018.

\bibitem{ps-ns}
Ya'acov Peterzil and Sergei Starchenko.
\newblock Complex analytic geometry in a nonstandard setting.
\newblock In {\em Model theory with applications to algebra and analysis.
  {V}ol. 1}, volume 349 of {\em London Math. Soc. Lecture Note Ser.}, pages
  117--165. Cambridge Univ. Press, Cambridge, 2008.

\bibitem{ps-crelle}
Ya'acov Peterzil and Sergei Starchenko.
\newblock Complex analytic geometry and analytic-geometric categories.
\newblock {\em J. Reine Angew. Math.}, 626:39--74, 2009.

\bibitem{shiffman}
Bernard Shiffman.
\newblock On the removal of singularities of analytic sets.
\newblock {\em Michigan Math. J.}, 15:111--120, 1968.

\bibitem{stoll}
Wilhelm Stoll.
\newblock The growth of the area of a transcendental analytic set. {I}, {II}.
\newblock {\em Math. Ann.}, 156:144--170, 1964.

\bibitem{stvol}
Gabriel Stolzenberg.
\newblock {\em Volumes, limits, and extensions of analytic varieties}.
\newblock Lecture Notes in Mathematics, No. 19. Springer-Verlag, Berlin-New
  York, 1966.

\bibitem{vdd}
Lou van~den Dries.
\newblock {\em Tame topology and o-minimal structures}, volume 248 of {\em
  London Mathematical Society Lecture Note Series}.
\newblock Cambridge University Press, Cambridge, 1998.

\bibitem{vdd-m}
Lou van~den Dries and Chris Miller.
\newblock Geometric categories and o-minimal structures.
\newblock {\em Duke Math. J.}, 84(2):497--540, 1996.

\bibitem{wilkie}
A.~J. Wilkie.
\newblock Model completeness results for expansions of the ordered field of
  real numbers by restricted {P}faffian functions and the exponential function.
\newblock {\em J. Amer. Math. Soc.}, 9(4):1051--1094, 1996.

\end{thebibliography}

\end{document}